\newtheorem{theorem}{Theorem}[section]
\newtheorem{lemma}[theorem]{Lemma}
\newtheorem{proposition}[theorem]{Proposition}
\theoremstyle{definition}
\newtheorem{example}[theorem]{Example}
\newtheorem{note}[theorem]{Note}
\theoremstyle{remark}
\begin{document}

\title[A symbolic approach to {B}ernoulli-{B}arnes polynomials]{A symbolic approach to 
some identities for {B}ernoulli-{B}arnes polynomials}

\author[]{Lin Jiu}
\address{Department of Mathematics,
Tulane University, New Orleans, LA 70118}
\email{ljiu@tulane.edu}

\author[]{Victor H. Moll}
\address{Department of Mathematics,
Tulane University, New Orleans, LA 70118}
\email{vhm@tulane.edu}

\author[]{Christophe Vignat}
\address{Department of Mathematics,
Tulane University, New Orleans, LA 70118}
\email{cvignat@tulane.edu}

\subjclass[2010]{Primary 11B68, 05A40.  Secondary 11B83}

\date{\today}

\keywords{Bernoulli-Barnes polynomials; umbral calculus; self-dual sequences}

\begin{abstract}
A symbolic method is used to establish some properties of the Bernoulli-Barnes polynomials. 
\end{abstract}

\maketitle

\newcommand{\ba}{\begin{eqnarray}}
\newcommand{\ea}{\end{eqnarray}}
\newcommand{\ift}{\int_{0}^{\infty}}
\newcommand{\nn}{\nonumber}
\newcommand{\no}{\noindent}
\newcommand{\lf}{\left\lfloor}
\newcommand{\rf}{\right\rfloor}
\newcommand{\realpart}{\mathop{\rm Re}\nolimits}
\newcommand{\imagpart}{\mathop{\rm Im}\nolimits}

\newcommand{\op}[1]{\ensuremath{\operatorname{#1}}}
\newcommand{\pFq}[5]{\ensuremath{{}_{#1}F_{#2} \left( \genfrac{}{}{0pt}{}{#3}
{#4} \bigg| {#5} \right)}}

\newtheorem{Definition}{\bf Definition}[section]
\newtheorem{Thm}[Definition]{\bf Theorem}
\newtheorem{Example}[Definition]{\bf Example}
\newtheorem{Lem}[Definition]{\bf Lemma}
\newtheorem{Cor}[Definition]{\bf Corollary}
\newtheorem{Prop}[Definition]{\bf Proposition}
\numberwithin{equation}{section}

\section{Introduction}
\label{sec-intro}

The Bernoulli numbers $B_{n}$, defined by their exponential generating function
\begin{equation}
\frac{z}{e^{z}-1} = \sum_{k=0}^{\infty} B_{k} \frac{z^{k}}{k!}
\end{equation}
\noindent
have produced  a variety of generalizations in the literature. The so-called Bernoulli-Barnes 
numbers $B_{k}(\mathbf{a})$, defined by 
\begin{equation}
\prod_{j=1}^{n} \frac{z}{e^{a_{j}z} - 1} = \sum_{k=0}^{\infty} \frac{B_{k}(\mathbf{a})}{k!} z^{k},
\end{equation}
depend on a multi-dimensional parameter $\mathbf{a} = (a_{1}, \cdots, a_{n}) \in \mathbb{C}^{n}$.  The Bernoulli numbers 
correspond to $n=1$ and $\mathbf{a}=1$.

For any sequence of numbers $\{ a_{j} \}$  with exponential generation function 
$\begin{displaystyle}  
  f(z) = \sum_{j=0}^{\infty} a_{j} \frac{z^{j}}{j!},
  \end{displaystyle}$
 associate the  sequence of polynomials
 $ \begin{displaystyle}
  A_{j}(x) = \sum_{\ell=0}^{j} \binom{j}{\ell} a_{j-\ell}x^{\ell}.
  \end{displaystyle}$
 An elementary argument shows that $e^{xz} f(z)$ is the exponential generating function for $\{ A_{j}(x) \}$.  This 
 produces, from $B_{k}(\mathbf{a})$, the    {\textit{Bernoulli-Barnes polynomials}}
\begin{equation}
B_{j}(x;\mathbf{a}) = \sum_{\ell=0}^{j}  \binom{j}{\ell} B_{j-\ell}(\mathbf{a}) x^{\ell}
\end{equation}
with exponential generating function
\begin{equation}
\sum_{j=0}^{\infty} B_{j}(x;\mathbf{a}) \frac{z^{j}}{j!} = e^{xz} \prod_{k=1}^{n} \frac{z}{e^{a_{k}z} - 1}.
\end{equation}

In the  special case $\mathbf{1} = (1, \cdots, 1)$ one obtains the N\"{o}rlund polynomials  $B_{j}(x;\mathbf{1})$
\begin{equation}
\sum_{j=0}^{\infty} B_{j}(x;\mathbf{1}) \frac{z^{j}}{j!} = e^{xz}  \frac{z^{n}}{(e^{z}-1)^{n}}.
\label{gen-norlund1}
\end{equation}

The Bernoulli-Barnes numbers $B_{k}(\mathbf{a})$ can be expressed  in terms of the Bernoulli numbers $B_{k}$  by
the multiple sum 
\begin{equation}
B_{k}(\mathbf{a}) = \sum_{m_{1} + \cdots + m_{n} = k} 
\binom{k}{m_{1}, \cdots, m_{n}} a_{1}^{m_{1}-1}  \cdots a_{n}^{m_{n}-1} B_{m_{1}} \cdots B_{m_{n}}.
\end{equation}
\noindent
 Therefore $a_{1}\cdots a_{n} B_{k}(\mathbf{a})$ is also a  polynomial in $\mathbf{a}$. Some parts of the literature
 refer to them as the 
 Bernoulli-Barnes polynomials. The reader should be  aware of this share of nomenclature.  \\

The first result requires the notion of a self-dual  sequence. Recall that $\{ a_{n} \}$ is called self-dual if it satisfies
\begin{equation}
a_{n}  = \sum_{k=0}^{n} \binom{n}{k} (-1)^{k} a_{k}, \quad \text{ for all } n \in \mathbb{N}.
\end{equation}
\noindent
The recent study \cite{bayad-2014a} contains the following statement  as Corollary $5.4$:\\

{\textit{
 Let $\mathbf{a} = (a_{1}, \cdots, a_{n}) \in \mathbb{C}^{n}$ with 
 $A = a_{1} + \cdots + a_{n} \neq 0$. Then
the sequence  $\{ (-1)^{n} A^{-n} B_{n}(\mathbf{a}) : \, n \in \mathbb{Z}_{\geq 0} \}$ is a self-dual sequence.
}}

\smallskip

The authors state that 
\begin{center}
{\textit{It would be interesting to prove this statement  directly}}.
\end{center}

\smallskip

Section \ref{sec-selfdual} describes 
self-dual sequences and provides the requested direct proof.

\medskip

The arguments presented here are in the spirit of symbolic calculus. In this framework, one 
defines a  {\textit{Bernoulli symbol}} $\mathcal{B}$  and an {\textit{evaluation map}} $eval$ such that 
\begin{equation}
eval \left( \mathcal{B}^{n} \right) = B_{n}.
\end{equation}
\noindent
The reader is referred to \cite{dixit-2014a} and \cite{gessel-2003a} for the rules of this method. To illustrate the main idea, and 
omitting the {\it{eval }} operator to simplify notation, consider the symbolic identity
\begin{equation}
e^{\mathcal{B}z} = \frac{z}{e^{z}-1}.
\end{equation}
\noindent
This is explained  by the identities 
\begin{equation}
eval \left( e^{\mathcal{B}z } \right) = 
eval \left( \sum_{n =0} ^{\infty} \frac{\mathcal{B}^{n}}{n!} z^{n} \right) = 
\sum_{n = 0} ^{\infty} \frac{eval \left( \mathcal{B}^{n} \right)}{n!} z^{n}  = 
\sum_{n = 0} ^{\infty} \frac{B_{n} z^{n}}{n!}  = \frac{z}{e^{z}-1}.
\end{equation}
\noindent
The symbolic version of the Bernoulli polynomials $B_{n}(x)$, defined by the generating  function
\begin{equation}
\sum_{n = 0}^{\infty}  \frac{B_{n}(x)}{n!} z^{n} = \frac{ze^{xz}}{e^{z}-1}
\end{equation}
\noindent
is simply (where the {\textit{eval}} map has been omitted again)
\begin{equation}
B_{n}(x) = ( \mathcal{B} + x)^{n}.
\end{equation}
The principle of symbolic calculus is to perform all computations replacing the Bernoulli polynomial $B_{n}(x)$ by 
the symbol $(\mathcal{B}+ x)^{n}$ and, at the end of the process, apply the evaluation map to obtain the result. The 
basic expression for Bernoulli polynomials in terms of Bernoulli numbers  illustrates the method: 
\begin{equation}
B_{n}(x) = (\mathcal{B}+x)^{n} = \sum_{k=0}^{n} \binom{n}{k} \mathcal{B}^{k} x^{n-k} = 
 \sum_{k=0}^{n} \binom{n}{k} B_{k}x^{n-k}.
 \end{equation}
 The symbolic representation of the Bernoulli-Barnes numbers is obtained from a collection of $n$ independent 
 Bernoulli symbols 
 $\{ \mathcal{B}_{i} \}_{1 \leq i \leq n }$, where independence is understood in the sense that 
 \begin{equation}
 e^{z( \mathcal{B}_{i} + \mathcal{B}_{j})}  = e^{z \mathcal{B}_{i}}  e^{z \mathcal{B}_{j}}, \text{ for any } i \neq j.
 \end{equation}
\noindent 
Then the Bernoulli-Barnes numbers $B_{k}(\mathbf{a})$ are given  in terms $\mathbf{a} = (a_{1}, \cdots, a_{n})$ and 
$\mathcal{B} = ( \mathcal{B}_{1}, \cdots, \mathcal{B}_{n})$ by
\begin{equation}
B_{k}(\mathbf{a}) = \frac{1}{|\mathbf{a}|} \left( \mathbf{a} \cdot \mathcal{B} \right)^{k}
\end{equation}
\noindent
where 
\begin{equation}
\mathbf{a} \cdot \mathcal{B} = \sum_{k=1}^{n} a_{k} \mathcal{B}_{k} \text{ and } 
|\mathbf{a}| = \prod_{k=1}^{n} a_{k}.
\end{equation}
\noindent
Similarly, the Bernoulli-Barnes polynomials are represented symbolically by 
\begin{equation}
B_{k}(\mathbf{a};x) = \frac{1}{|\mathbf{a}|} ( x + \mathbf{a} \cdot \mathcal{B} )^{k}.
\end{equation}

 \section{A difference formula}
 \label{sec-difference}
 
 The section in \cite{bayad-2014a} containing the requested proof begins with a difference formula for the 
 Bernoulli-Barnes polynomials.   A direct proof by symbolic arguments is presented here. For 
 any $L \subset  \{ 1, \cdots, n \}$, say $L = \{ i_{1}, \cdots, i_{r} \}$, introduce the notation 
 \begin{equation}
 {\mathbf{a}}_{L} = (a_{i_{1}}, \cdots, a_{i_{r}}). 
 \end{equation}
 In general, any symbol with a set  $L \subset \{ 1, \cdots, n \}$ as a subscript, indicates that the indices appearing in the 
  symbol should be  restricted to those in the set $L$. For instance,  ${\mathbf{a}}_{\{2, 5 \}} = (a_{2}, a_{5})$ and 
  $|\mathbf{a}|_{\{ 2, 5  \} }= a_{2}a_{5}$. 
 
 \medskip
 
 \noindent
 Theorem $5.1$ in \cite{bayad-2014a} is restated here.
 
 \begin{theorem}
 \label{thm-51}
 For $\mathbf{a} = (a_{1}, \cdots, a_{n}) \in \mathbb{C}^{n}$ and  
 $\begin{displaystyle} A = \sum_{i=1}^{n} a_{i} \end{displaystyle}$, we have the difference formula 
 \begin{equation}
 (-1)^{m} B_{m}(-x;\mathbf{a}) - B_{m}(x;\mathbf{a}) = m! \sum_{\ell=0}^{n-1} \sum_{|L| = \ell} 
 \frac{B_{m-n+\ell}(x; {\mathbf{a}}_{L})}{(m-n+\ell)!}
 \label{diff-51}
 \end{equation}
 \noindent
 with $B_{m}(x; {\mathbf{a}}_{L}) = x^{m}$ if $L = \emptyset$. Furthermore, 
 \begin{equation}
 B_{m} \left( x + A; \mathbf{a} \right) = (-1)^{m} B_{m}(-x; \mathbf{a} ).
 \end{equation}
 \end{theorem}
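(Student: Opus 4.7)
\emph{First I would prove} the complementation identity $B_m(x+A;\mathbf{a}) = (-1)^m B_m(-x;\mathbf{a})$ by direct symbolic manipulation. From the umbral representation,
\[
\left(\prod_{i=1}^n a_i\right) B_m(x+A;\mathbf{a}) = \bigl(x + \textstyle\sum_{i=1}^n a_i(1+\mathcal{B}_i)\bigr)^m,
\]
and a multinomial expansion together with the independence of the symbols $\mathcal{B}_i$ reduces each term to $\binom{m}{k_0,\ldots,k_n} x^{k_0} \prod_i a_i^{k_i} B_{k_i}(1)$. The classical symmetry $B_k(1) = (-1)^k B_k$, which reads umbrally as $(1+\mathcal{B})^k = (-\mathcal{B})^k$, replaces $B_{k_i}(1)$ by $(-1)^{k_i} B_{k_i}$. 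The global sign $(-1)^{k_1+\cdots+k_n} = (-1)^{m-k_0}$, together with $(-1)^{k_0}x^{k_0} = (-x)^{k_0}$, then collapses the whole sum to $(-1)^m \bigl(-x + \sum_i a_i\mathcal{B}_i\bigr)^m = (-1)^m \bigl(\prod_i a_i\bigr) B_m(-x;\mathbf{a})$, which is the claimed complementation.

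\emph{For the difference formula}, I would invoke the complementation just established to rewrite the left-hand side as $B_m(x+A;\mathbf{a}) - B_m(x;\mathbf{a})$ and then work at the level of exponential generating functions. Since the shift $x \mapsto x+A$ multiplies the EGF by $e^{Az}$, the EGF of the difference is
\[
(e^{Az}-1)\,e^{xz} \prod_{k=1}^n \frac{z}{e^{a_k z}-1}.
\]
The key combinatorial step is to expand
\[
e^{Az} - 1 \;=\; \prod_{k=1}^n \bigl(1+(e^{a_k z}-1)\bigr) - 1 \;=\; \sum_{\emptyset \neq L \subseteq \{1,\ldots,n\}} \prod_{k \in L}(e^{a_k z}-1).
\]
Each factor $e^{a_k z}-1$ for $k \in L$ cancels the denominator of the matching $\tfrac{z}{e^{a_k z}-1}$ and leaves a bare $z$. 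Writing $K = \{1,\ldots,n\}\setminus L$, a proper (possibly empty) subset, the EGF becomes
\[
\sum_{K \subsetneq \{1,\ldots,n\}} z^{\,n-|K|}\, e^{xz} \prod_{j\in K} \frac{z}{e^{a_j z}-1},
\]
whose inner factor is precisely the EGF of $B_j(x;\mathbf{a}_K)$. Extracting the coefficient of $z^m/m!$ and reindexing by $\ell = |K|$ and $j = m-n+\ell$ yields the claimed identity, with the case $\ell=0$ producing $x^{m-n}$ from the empty product and the $e^{xz}$ factor, in agreement with the convention $B_j(x;\mathbf{a}_\emptyset) = x^j$.

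\emph{The main obstacle} is the combinatorial bookkeeping in the subset expansion: one has to verify that the $n$ factors of $z$ in the original product redistribute correctly as $|L|$ bare $z$'s (from the cancellations) plus $|K|$ unchanged factors $\tfrac{z}{e^{a_j z}-1}$, and that the passage from nonempty $L$ to proper $K$ shifts the total power of $z$ from $|L|+n$ to $n-|K|$ after accounting for the $z$'s already in the numerators. One also needs the natural convention that $B_{m-n+\ell}(x;\mathbf{a}_K)/(m-n+\ell)! = 0$ whenever $m-n+\ell<0$, consistent with the vanishing of those coefficients in the expansion of the polynomial $z^{n-|K|}$ times a power series in $z$.
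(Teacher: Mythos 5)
Your proof is correct, but it takes a different route from the paper for the difference formula. For the complementation $B_m(x+A;\mathbf{a})=(-1)^mB_m(-x;\mathbf{a})$ you and the paper use the same underlying fact, $-\mathcal{B}=\mathcal{B}+1$ (equivalently $B_k(1)=(-1)^kB_k$), though the paper applies it symbol-by-symbol inside $f\left(x+\sum_i a_i(1+\mathcal{B}_i)\right)$ rather than via an explicit multinomial expansion; your version is a more pedestrian but equivalent computation. For the difference formula the paper does \emph{not} argue at the level of generating functions: it states the general identity
$f(x-\mathbf{a}\cdot\mathcal{B})=\sum_{j}\sum_{|J|=j}|a|_{J^*}f^{(n-j)}\left(x+(\mathbf{a}\cdot\mathcal{B})_J\right)$
for an arbitrary reasonable $f$, proves the one-variable case $f(x-a\mathcal{B})=f(x+a\mathcal{B})+af'(x)$ from the lemma $g(-\mathcal{B})=g(\mathcal{B})+g'(0)$, and gets the general case by induction (with alternative proofs via the operator $T_a=e^{a\mathcal{B}\partial_x}+a\partial_x$ and via the uniform symbol); Theorem \ref{thm-51} is then the specialization $f(x)=x^m/m!$. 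Your subset expansion $e^{Az}-1=\sum_{\emptyset\neq L}\prod_{k\in L}(e^{a_kz}-1)$, with the telescoping cancellation against $\prod_k z/(e^{a_kz}-1)$, is essentially the generating-function shadow of the paper's expansion of $\prod_j(e^{a_j\mathcal{B}_j\partial_x}+a_j\partial_x)$, so the combinatorics is the same; what the paper's formulation buys is the statement for general $f$ (used later in Theorem \ref{thm-general-method}), while yours buys a self-contained, induction-free verification that stays entirely within elementary power-series manipulation. Your closing remarks on the bookkeeping are sound: the convention $1/(m-n+\ell)!=0$ for $m-n+\ell<0$ is exactly what makes the coefficient extraction consistent, and the $\ell=0$ term correctly reproduces $x^{m-n}$.
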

 
 It is shown that Theorem \ref{thm-51} is a special case of a general expansion formula. A variety of proofs are presented 
 below.  The conditions imposed on the function $f$ in the statement of Theorem \ref{thm-general} are 
 those required for the existence of the expressions appearing in it. Those functions will be called {\textit{reasonable}}. In 
 particular polynomials are reasonable functions.  Here $f^{(j)}(x)$ represents the $j$-th derivative of $f$.
 
 \begin{theorem}
 \label{thm-general}
 Let $f$ be a reasonable function. Then, with 
 $\mathbf{a} = (a_{1}, \cdots, a_{n})$,
 \begin{equation}
 f( x - \mathbf{a} \cdot \mathcal{B})  = \sum_{j=0}^{n} \sum_{|J| = j} |a|_{J^{*}} 
 f^{(n-j)} \left( x + ( \mathbf{a} \cdot \mathcal{B})_{J}  \right)
 \end{equation}
 \noindent
 where $J \subset \{ 1, \cdots, n \}$ and $J^{*} = \{ 1, \cdots, n \}  \setminus J$. Moreover,
 \begin{equation}
 f \left( x + A + \mathbf{a} \cdot \mathcal{B} \right) = f \left( x - \mathbf{a} \cdot \mathcal{B} \right).
 \end{equation}
 \end{theorem}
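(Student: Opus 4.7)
The plan is to reduce the identity to a single-symbol sign-flip relation and then exploit the independence of $\mathcal{B}_1,\ldots,\mathcal{B}_n$ to extend it multiplicatively over subsets of $\{1,\ldots,n\}$. The entire argument rests on the compact symbolic identity
$$e^{-u\mathcal{B}} \;=\; \frac{-u}{e^{-u}-1} \;=\; u + \frac{u}{e^u - 1} \;=\; u + e^{u \mathcal{B}},$$
which is the generating-function shadow of the classical symmetry $(-\mathcal{B})^m = (\mathcal{B}+1)^m$ of the Bernoulli symbol.

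Next, I would substitute $u = a_k \partial_x$ into this relation. Because $e^{v\partial_x}$ is the translation operator $f(x)\mapsto f(x+v)$, applying both sides to a reasonable $f$ yields the single-symbol identity
$$f(x - a_k \mathcal{B}_k) \;=\; f(x + a_k \mathcal{B}_k) + a_k f'(x),$$
which is the theorem in the case $n=1$. The substitution is justified by matching coefficients of the formal power series in $u$; \emph{reasonable} here means $f$ has enough regularity for this to make sense, with polynomials as the safe default.

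For general $n$, the independence axiom $e^{z(\mathcal{B}_i + \mathcal{B}_j)} = e^{z \mathcal{B}_i} e^{z \mathcal{B}_j}$ factors the global translation operator and the single-symbol identity is applied to each factor:
$$e^{-(\mathbf{a}\cdot\mathcal{B})\partial_x} \;=\; \prod_{k=1}^{n} e^{-a_k \mathcal{B}_k \partial_x} \;=\; \prod_{k=1}^{n}\bigl(e^{a_k \mathcal{B}_k \partial_x} + a_k \partial_x\bigr).$$
Expanding the product over subsets $J\subset\{1,\ldots,n\}$, with $k\in J$ selecting the factor $e^{a_k \mathcal{B}_k \partial_x}$ and $k\in J^{*}$ selecting $a_k \partial_x$, produces
$$e^{-(\mathbf{a}\cdot\mathcal{B})\partial_x} \;=\; \sum_{J\subset\{1,\ldots,n\}} |a|_{J^{*}}\, \partial_x^{\,n-|J|}\, e^{(\mathbf{a}\cdot\mathcal{B})_J \partial_x}.$$
Applying both sides to $f(x)$ and using $e^{v\partial_x}f(x) = f(x+v)$ recovers the claimed expansion.

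For the second statement, the symbolic symmetry $-\mathcal{B}_k \equiv \mathcal{B}_k + 1$ applied to each factor, together with independence, gives $-\mathbf{a}\cdot\mathcal{B} \equiv \mathbf{a}\cdot\mathcal{B} + A$, whence $f(x + A + \mathbf{a}\cdot\mathcal{B}) = f(x - \mathbf{a}\cdot\mathcal{B})$. The main obstacle I anticipate is purely bookkeeping: in the product expansion one must verify that the factors drawn from $J^{*}$ contribute $|a|_{J^{*}}$ and $n - |J|$ extra derivatives, while those drawn from $J$ combine cleanly into $e^{(\mathbf{a}\cdot\mathcal{B})_J \partial_x}$. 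Everything else follows from one-line manipulations of the Bernoulli generating function.
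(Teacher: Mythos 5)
Your argument is correct and is essentially the paper's own: the identity $e^{-u\mathcal{B}} = u + e^{u\mathcal{B}}$ is Lemma \ref{lemma-g1}, and your factorization $e^{-(\mathbf{a}\cdot\mathcal{B})\partial_x} = \prod_k (e^{a_k\mathcal{B}_k\partial_x} + a_k\partial_x)$ expanded over subsets is exactly the operational-calculus proof of Section \ref{sec-operation} with the operators $T_{a_k}$. The only cosmetic difference is that the paper's first proof phrases the passage from $n=1$ to general $n$ as an induction, whereas you write out the product expansion explicitly, as the paper itself does in Section \ref{sec-operation}.
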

 
 \begin{example}
 The theorem gives, for $n=2$ and any reasonable function $f$, the relation 
 \begin{eqnarray*}
 f ( x - a_{1} \mathcal{B}_{1} - a_{2} \mathcal{B}_{2} )  & = &   f ( x + a_{1} \mathcal{B}_{1} + a_{2} \mathcal{B}_{2} ) \\
 & + & a_{1} f'(x + a_{2} \mathcal{B}_{2}) + a_{2} f'(x + a_{1} \mathcal{B}_{1})  + a_{1}a_{2}f''(x).
 \end{eqnarray*}F
 \end{example}
 
 \begin{note}
 The classical differentiation formula 
 \begin{equation}
 \left( \frac{d}{dx} \right)^{j} \frac{B_{n}(x)}{n!} = \frac{B_{n-j}(x)}{(n-j)!}
 \end{equation}
 \noindent
 shows that Theorem \ref{thm-51} is  the special case $f(x) = x^{m}/m!$ of Theorem \ref{thm-general}.
 \end{note}
 
 The proof of Theorem \ref{thm-general} uses some basic identities of symbolic calculus. The proofs are presented here 
 for completeness. 
 
 \begin{lemma}
 \label{lemma-g1}
 Let $g$ be a reasonable function. Then 
 \begin{equation}
 g( - \mathcal{B}) = g( \mathcal{B} + 1) = g( \mathcal{B}) + g'(0).
 \end{equation}
 In particular, 
 \begin{equation}
 -\mathcal{B} = \mathcal{B}+1.
 \label{bplus1}
 \end{equation}
 \end{lemma}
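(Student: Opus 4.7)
The plan is to reduce everything to the exponential case $g(x) = e^{zx}$, where the symbolic identities follow from direct manipulation of the generating function $e^{\mathcal{B}z} = z/(e^z - 1)$, and then extend to arbitrary reasonable $g$ by Taylor expansion.

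First I would compute $e^{-\mathcal{B}z}$ by substituting $-z$ for $z$ in the defining generating function:
\begin{equation*}
e^{-\mathcal{B}z} = \frac{-z}{e^{-z}-1} = \frac{z\,e^{z}}{e^{z}-1} = e^{z}\cdot e^{\mathcal{B}z} = e^{(\mathcal{B}+1)z}.
\end{equation*}
Matching coefficients of $z^n/n!$ on the two ends gives the evaluated identity $(-\mathcal{B})^n = (\mathcal{B}+1)^n$ for every $n$, which is precisely \eqref{bplus1}. For a reasonable $g$, writing $g(x) = \sum_{n \geq 0} g^{(n)}(0)\,x^n/n!$ and applying the symbolic substitution termwise yields $g(-\mathcal{B}) = g(\mathcal{B}+1)$ by linearity of the evaluation map.

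For the second equality $g(\mathcal{B}+1) = g(\mathcal{B}) + g'(0)$, I would again start from the exponential case. The computation above shows
\begin{equation*}
e^{(\mathcal{B}+1)z} - e^{\mathcal{B}z} = \frac{z\,e^{z}}{e^{z}-1} - \frac{z}{e^{z}-1} = z,
\end{equation*}
so extracting the coefficient of $z^n/n!$ yields $(\mathcal{B}+1)^n - \mathcal{B}^n = \delta_{n,1}$ after evaluation (this is nothing but the classical Bernoulli recursion $\sum_{k=0}^{n-1} \binom{n}{k} B_k = 0$ for $n \geq 2$, and $=1$ for $n=1$, stated symbolically). Then Taylor-expanding $g$ gives
\begin{equation*}
g(\mathcal{B}+1) - g(\mathcal{B}) = \sum_{n \geq 1} \frac{g^{(n)}(0)}{n!}\bigl[(\mathcal{B}+1)^n - \mathcal{B}^n\bigr] = g'(0).
\end{equation*}

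The main obstacle is conceptual rather than technical: one must be careful that the symbolic equation $-\mathcal{B} = \mathcal{B}+1$ is only meaningful after evaluation and after matching like powers, so passing from exponentials to arbitrary $g$ requires the formal Taylor expansion to be absorbed under $\mathrm{eval}$. This is exactly what the word \emph{reasonable} is meant to guarantee, and is the reason the authors restrict to functions admitting such an expansion.
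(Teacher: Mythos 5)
Your proposal is correct and follows essentially the same route as the paper: both arguments establish the identities for monomials/exponentials via the generating-function manipulation $\tfrac{-z}{e^{-z}-1}=\tfrac{ze^{z}}{e^{z}-1}=e^{z}e^{\mathcal{B}z}$, extract the Kronecker delta from $\tfrac{ze^{z}}{e^{z}-1}-\tfrac{z}{e^{z}-1}=z$, and extend to general reasonable $g$ by linearity of the evaluation map.
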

 \begin{proof}
 The proof is presented for the monomial $g(x) = x^{k}$, the general case follows by linearity. The exponential generating 
 function of $(- \mathcal{B})^{k}$ is 
 \begin{eqnarray*} 
 \sum_{k \geq 0} \frac{(- \mathcal{B})^{k} z^{k}}{k!} & = & \text{exp}( - \mathcal{B}z )
   = \frac{-z}{e^{-z} -1} = \frac{ze^{z}}{e^{z}-1} \\
  & = & e^{z} e^{\mathcal{B}z} 
   = e^{z( \mathcal{B} + 1)} \\
  & = & \sum_{k \geq 0} \frac{( \mathcal{B}+1)^{k}z^{k}}{k!},
  \end{eqnarray*}
  \noindent
  which proves the first identity.  Now since  $g(x) = x^{k}$ produces $g'(0) = \delta_{k-1}$ (the Kronecker delta), it 
  follows that 
  \begin{equation}
  \sum_{k \geq 0} \frac{\mathcal{B}^{k} z^{k}}{k!} + 
  \sum_{k \geq 0} \delta_{k-1} \frac{z^{k}}{k!} = \frac{z}{e^{z}-1} + z = \frac{ze^{z}}{e^{z}-1} =  
  e^{(\mathcal{B}+1)z} = 
  \sum_{k \geq 0} \frac{( \mathcal{B}+1)^{k} z^{k}}{k!}
  \end{equation}
  \noindent
  proving the second identity.
  \end{proof}
  
  The first proof of Theorem \ref{thm-general} is given next.
  
  \begin{proof}
  Lemma \ref{lemma-g1} applied to $g(\mathcal{B}) = f(x + a \mathcal{B})$ gives 
  \begin{equation}
  f( x - a \mathcal{B}) = f( x + a \mathcal{B}) + a f'(x).
  \end{equation}
  \noindent
  This is the result for $n=1$. The general case is obtained  by a direct induction argument.
  \end{proof}

 \section{An operational Calculus proof}
 \label{sec-operation}
 
 This section presents a proof of Theorem \ref{thm-general} based on the action of the operator $T_{a}$  on a 
 function $f$ by 
 \begin{equation}
 T_{a}[f(x)]  = f(x - a \mathcal{B}). 
 \end{equation}
 \noindent
 Naturally 
 \begin{equation}
 T_{a_{1}} \circ T_{a_{2}} [f(x)] = T_{a_{1}}[f(x - a_{1} \mathcal{B}_{1}) ] = 
 f( x - a_{1}\mathcal{B}_{1} - a_{2}\mathcal{B}_{2} )
 \end{equation}
 \noindent
 showing that $T_{a_{1}}$ and $T_{a_{2}}$ commute with each other.  On the other hand, since 
 $f(x - {\mathbf{a}} \cdot \mathcal{B}) = f(x + {\mathbf{a}} \cdot \mathcal{B}) + af'(x)$,  the operator $T_{a}$ can be 
 formally expressed as
 \begin{equation}
 T_{a} = e^{a \mathcal{B} \frac{\partial}{\partial x}} + a \frac{\partial}{\partial x},
 \end{equation}
 \noindent
 so that $T_{a}$ is the sum of two commuting operators.  The composition rule 
 \begin{eqnarray}
 T_{a_{1}} \circ T_{a_{2}} & = & e^{(a_{1}\mathcal{B}_{1} + a_{2} \mathcal{B}_{2}) \frac{\partial}{\partial x}}  \\
  & + & a_{1} \frac{\partial}{\partial x} e^{a_{2} \mathcal{B}_{2} \frac{\partial }{\partial x}} +
  a_{2} \frac{\partial}{\partial x} e^{a_{1} \mathcal{B}_{1} \frac{\partial }{\partial x}}  \nonumber \\
  & + & a_{1}a_{2} \frac{\partial^{2}}{\partial x^{2}} \nonumber 
  \end{eqnarray}
  \noindent
  gives the result of Theorem \ref{thm-general} for $n=2$.  The general case follows from the identity
  \begin{eqnarray}
  T_{a_{1}} \circ \cdots \circ T_{a_{n}} & = & \prod_{j=1}^{n} \left( e^{a_{j} \mathcal{B}_{j} \frac{\partial }{\partial x}} + 
  a_{j} \frac{\partial }{\partial x} \right) \\
  & = & \sum_{j=0}^{n} \sum_{|J|=j} |\mathbf{a}|_{J^{*}} \frac{\partial^{n-j}}{\partial x^{n-j}} 
  e^{( \mathbf{a} \cdot \mathcal{B})_{J} \, \frac{\partial }{\partial x}}. \nonumber 
  \end{eqnarray}
  
  \section{A new symbol and another  proof}
  \label{sec-uniform}
  
  This section provides a proof of Theorem \ref{thm-general} based on the uniform symbol $\mathcal{U}$ defined by 
  the  relation
  \begin{equation}
  f(x + \mathcal{U}) = \int_{0}^{1} f(x+u) \, du.
  \end{equation}
  \noindent 
  The uniform symbol acts like the inverse of the Bernoulli symbol, in a sense made precise in the next statement. 
  
  \begin{proposition}
  Let  $\mathcal{B}$ and $\mathcal{U}$ be the Bernoulli and uniform symbols, respectively. 
  Then, for any reasonable function $f$, 
  \begin{equation}
  f( x + \mathcal{U} + \mathcal{B}) = f(x).
  \end{equation}
  \noindent
  In particular, the relations 
  \begin{equation}
  g(x+ \mathcal{B}) = h(x) \text{   and   } h(x + \mathcal{U}) =  g(x)
  \end{equation}
  \noindent
  are equivalent.
  \end{proposition}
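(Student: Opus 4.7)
The plan is to reduce the statement to the identity $e^{z(\mathcal{U}+\mathcal{B})}=1$ at the level of exponential generating functions, and then deduce the operator equivalence by a pair of formal substitutions.

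First I would compute the generating function of the uniform symbol: by definition,
\[
e^{z\mathcal{U}} = \sum_{k\ge 0}\frac{\mathcal{U}^{k}}{k!}\,z^{k} = \int_{0}^{1} e^{uz}\,du = \frac{e^{z}-1}{z}.
\]
Declaring $\mathcal{U}$ and $\mathcal{B}$ independent symbols in the sense of Section \ref{sec-intro}, so that $e^{z(\mathcal{U}+\mathcal{B})}=e^{z\mathcal{U}}\,e^{z\mathcal{B}}$, and recalling $e^{z\mathcal{B}}=z/(e^{z}-1)$, one obtains
\[
e^{z(\mathcal{U}+\mathcal{B})}=\frac{e^{z}-1}{z}\cdot\frac{z}{e^{z}-1}=1.
\]
Extracting the coefficient of $z^{k}/k!$ gives $(\mathcal{U}+\mathcal{B})^{k}=\delta_{k,0}$ after evaluation, which for any polynomial $f$ is exactly $f(x+\mathcal{U}+\mathcal{B})=f(x)$. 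By linearity and the conventions of the symbolic calculus, this extends to every reasonable $f$.

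For the equivalence of the two relations, the plan is to apply the first part as a cancellation identity. Starting from $g(x+\mathcal{B})=h(x)$, I would substitute $x\mapsto x+\mathcal{U}$; independence of $\mathcal{U}$ and $\mathcal{B}$ yields $g(x+\mathcal{U}+\mathcal{B})=h(x+\mathcal{U})$, and the first identity collapses the left-hand side to $g(x)$, producing $h(x+\mathcal{U})=g(x)$. The converse is symmetric: in $h(x+\mathcal{U})=g(x)$, the substitution $x\mapsto x+\mathcal{B}$ gives $h(x+\mathcal{U}+\mathcal{B})=g(x+\mathcal{B})$, and the first identity rewrites the left-hand side as $h(x)$.

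The main obstacle I anticipate is foundational rather than computational: one must justify that $\mathcal{U}$ and $\mathcal{B}$ can legitimately be manipulated as independent symbols (parallel to the independence convention already adopted for the family $\{\mathcal{B}_{i}\}$) and that the evaluation map commutes with the substitutions $x\mapsto x+\mathcal{U}$ and $x\mapsto x+\mathcal{B}$. Once this is granted, the content of the proposition is nothing more than the cancellation of $(e^{z}-1)/z$ against $z/(e^{z}-1)$.
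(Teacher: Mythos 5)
Your proposal is correct and follows essentially the same route as the paper: both arguments reduce the claim to the generating-function cancellation $e^{z\mathcal{U}}e^{z\mathcal{B}}=\frac{e^{z}-1}{z}\cdot\frac{z}{e^{z}-1}=1$ and extend to general $f$ by linearity. Your treatment is in fact slightly more complete, since you derive $e^{z\mathcal{U}}=(e^{z}-1)/z$ from the definition and spell out the substitution argument for the equivalence of $g(x+\mathcal{B})=h(x)$ and $h(x+\mathcal{U})=g(x)$, which the paper leaves implicit.
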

  \begin{proof}
  The generating function 
  \begin{equation}
  \sum_{n \geq 0} \frac{(x + \mathcal{U}+ \mathcal{B})^{n}}{n!} z^{n} = 
 e^{zx + z \mathcal{U}+ z \mathcal{B}} = e^{zx} \frac{z}{e^{z}-1} \frac{e^{z}-1}{z} = e^{zx}
 \end{equation}
 \noindent
 shows that $(z + \mathcal{U} + \mathcal{B})^{n} = z^{n}$. The result extends to a general function $g$ by linearity.
  \end{proof}
  
  An interpretation of the special case $n=1$ in Theorem \ref{thm-general} is provided next. This is 
  \begin{equation}
  f(x  - a \mathcal{B})  = f( x + a \mathcal{B}) + a f'(x).
  \label{nice-1}
  \end{equation}
  Now replace $x$ by $x+ a \mathcal{U}$ and use the relation $- \mathcal{B} = \mathcal{B}+1$ to 
 convert the  left-hand side of \eqref{nice-1} to
  \begin{equation}
  f(x - a \mathcal{B} + a \mathcal{U}) = f( x + a( \mathcal{B}+1) + a \mathcal{U}) = f(x+a).
  \end{equation}
  \noindent
  The right-hand side of \eqref{nice-1} becomes
  \begin{equation}
  f( x + a \mathcal{B} + a \mathcal{U}) + a f'(x + a \mathcal{U}) = f(x)  + a f'(x + a \mathcal{U}).
  \end{equation}
  \noindent
  It follows that Theorem \ref{thm-general}, in the case $n=1$, is equivalent to the fundamental theorem of  Calculus
  \begin{equation}
  f(x+a) = f(x) + \int_{0}^{a} f'(x + u) \, du.
  \end{equation}
  \noindent
  This is now written in the form
  \begin{equation}
  \Delta_{a} f(x) = a f'( x + a \mathcal{U}),
  \end{equation}
  \noindent
  where $\Delta_{a}$ is the forward difference operator with step size $a$.  
  
  The proof of Theorem \ref{thm-general} for arbitrary $n$ follows from the method above and the elementary identity
  \begin{equation}
  \prod_{i=1}^{n} \Delta_{a_{i}} f(x) = a_{1} \cdots a_{n} f^{(n)}(x + a_{1}\mathcal{U}_{1} + \cdots + a_{n} \mathcal{U}_{n}).
  \end{equation}
  
 \section{Self-duality property for the Bernoulli-Barnes polynomials}
 \label{sec-selfdual}
 
 Given a sequence $\{ a_{k} \}$ define a new sequence $\{ a_{k}^{*} \}$ by the rule 
 \begin{equation}
 a_{n}^{*} = \sum_{k=0}^{n} \binom{n}{k} (-1)^{k} a_{k}.
 \end{equation}
 \noindent
 The inversion formula \cite[p. 192]{graham-1994a} gives 
 \begin{equation}
 a_{n} = \sum_{k=0}^{n} \binom{n}{k} (-1)^{k} a_{k}^{*}.
 \end{equation}
 \noindent
 The sequence $\{ a_{n}^{*} \}$ is called the dual of $\{ a_{n} \}$.  A sequence 
  is called {\textit{self-dual}} if it agrees with its dual.  Examples
 of self-dual sequences have been discussed in \cite{sunz-2000b,sunz-2003a}. For example, the fact that the sequence 
 $\{ (-1)^{n} B_{n} \}$ is self-dual is equivalent to the classical identity 
 \begin{equation}
 (-1)^{n} B_{n} = \sum_{k=0}^{n} \binom{n}{k} B_{k},
 \label{self-dual-ber}
 \end{equation}
 \noindent 
 which, expressed symbolically, is nothing but \eqref{bplus1}.  In  \cite{bayad-2014a} the authors prove the next
  result as Corollary $5.$. This is an extension of \eqref{self-dual-ber}  to the  Bernoulli-Barnes polynomials 
 and ask for a more direct proof.  Such a  proof is presented next. \\
 
 \noindent
 \begin{theorem}
 \label{thm-self-dual}
  Let 
$\mathbf{a} = (a_{1}, \cdots, a_{n})$ and $A = a_{1} + \cdots  + a_{n} \neq 0$. Then the sequence
 \begin{equation}
  p_{n} = (-1)^{n} A^{-n} B_{n}(\mathbf{a})
  \end{equation}
  \noindent
  is self-dual. 
  \end{theorem}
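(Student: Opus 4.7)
The plan is to reduce the self-duality identity to the symbolic representation $B_n(\mathbf{a}) = \frac{1}{|\mathbf{a}|}(\mathbf{a}\cdot\mathcal{B})^n$ introduced in Section~1, and then to invoke the second part of Theorem~\ref{thm-general}. Writing out what self-duality of $p_n$ means, one needs
\begin{equation}
(-1)^n A^{-n} B_n(\mathbf{a}) \;=\; \sum_{k=0}^{n} \binom{n}{k} (-1)^k p_k \;=\; \sum_{k=0}^{n} \binom{n}{k} A^{-k} B_k(\mathbf{a}),
\end{equation}
since the two factors of $(-1)^k$ cancel inside the sum.

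Next I would pass to the symbolic side. Using the linearity of the evaluation map and the binomial theorem applied under $eval$, the right-hand side becomes
\begin{equation}
\frac{1}{|\mathbf{a}|}\sum_{k=0}^{n} \binom{n}{k} A^{-k} (\mathbf{a}\cdot\mathcal{B})^k \;=\; \frac{1}{|\mathbf{a}|}\Bigl(1 + \tfrac{\mathbf{a}\cdot\mathcal{B}}{A}\Bigr)^{n} \;=\; \frac{1}{|\mathbf{a}| A^n}\bigl(A + \mathbf{a}\cdot\mathcal{B}\bigr)^n,
\end{equation}
while the left-hand side is $\frac{1}{|\mathbf{a}| A^n}(-\mathbf{a}\cdot\mathcal{B})^n$. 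Thus self-duality of $\{p_n\}$ is equivalent to the symbolic identity
\begin{equation}
(A + \mathbf{a}\cdot\mathcal{B})^n \;=\; (-\mathbf{a}\cdot\mathcal{B})^n.
\end{equation}
This is exactly the second assertion of Theorem~\ref{thm-general}, applied to the monomial $f(t) = t^n$ at $x = 0$, so the result follows directly.

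If one prefers a self-contained derivation of this last identity, it can be read off the exponential generating functions by combining the independence of the $\mathcal{B}_i$ with the relation $e^{-\mathcal{B}z} = e^{z}\,e^{\mathcal{B}z}$ from Lemma~\ref{lemma-g1}, namely
\begin{equation}
e^{-(\mathbf{a}\cdot\mathcal{B})z} \;=\; \prod_{i=1}^{n} e^{-a_i \mathcal{B}_i z} \;=\; \prod_{i=1}^{n} e^{a_i z}\,e^{a_i \mathcal{B}_i z} \;=\; e^{(A + \mathbf{a}\cdot\mathcal{B})z},
\end{equation}
and comparing coefficients of $z^n/n!$. The only real subtlety, and hence the part that needs care rather than difficulty, is to justify the symbolic binomial expansion under $eval$ and the factorization of the exponential across independent Bernoulli symbols; both of these were already set up in the introduction. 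After that there is no substantial obstacle: the theorem is a clean two-line consequence of Theorem~\ref{thm-general}.
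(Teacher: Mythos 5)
Your proposal is correct and follows essentially the same route as the paper: both expand the dual sequence via the binomial theorem in symbolic form to get $A^{-n}(A+\mathbf{a}\cdot\mathcal{B})^{n}$ and then invoke the reflection $-\mathcal{B}_i=\mathcal{B}_i+1$ on each independent symbol to identify this with $A^{-n}(-\mathbf{a}\cdot\mathcal{B})^{n}=p_n$ (the paper inlines this last step by writing $A+\mathbf{a}\cdot\mathcal{B}=\sum_i a_i(1+\mathcal{B}_i)$ rather than citing the second part of Theorem~\ref{thm-general}, but the computation is identical).
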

  \begin{proof}
  Observe that 
  \begin{eqnarray*}
  p_{n}^{*}  & = &  \sum_{k=0}^{n} \binom{n}{k} (-1)^{k} p_{k} \\
   & = & \sum_{k=0}^{n} \binom{n}{k}    A^{-k} ( \mathbf{a} \cdot \mathcal{B} )^{k} \\
   & = & \left( 1 + \frac{1}{A} \mathbf{a} \cdot \mathcal{B} \right)^{n} \\
   & = & A^{-n} ( A + \mathbf{a} \cdot \mathcal{B} )^{n} \\
   & = & A^{-n} \left( a_{1}(1 + \mathcal{B}_{1}) + \cdots + a_{n} (1+ \mathcal{B}_{n} )  \right)^{n} \\
   & = & A^{-n}  ( - \mathbf{a} \cdot \mathcal{B} )^{n} \\
   & = & (-1)^{n} A^{-n} B_{n}(\mathbf{a}) \\
   & = & p_{n}.
   \end{eqnarray*}
   \noindent
   This completes the proof.
   \end{proof}
   
     The authors of \cite{bayad-2014a} then ask for a  direct proof of the following symmetry formula. Such a 
     proof is presented next.

 \begin{theorem}
 Let $\mathbf{a} = (a_{1}, \cdots, a_{n}) \in \mathbb{R}^{n}$ with $\begin{displaystyle} 
 A = \sum_{k=1}^{n} a_{k} \neq 0\end{displaystyle}$. Then 
 for any integers $l, \, m \geq 0$,
 \begin{equation}
 \label{quest-1}
 (-1)^{m} \sum_{k=0}^{m} \binom{m}{k} A^{m-k} B_{l+k}(x;\mathbf{a}) = 
  (-1)^{l} \sum_{k=0}^{l} \binom{l}{k} A^{l-k} B_{m+k}(-x;\mathbf{a}),
 \end{equation}
 \noindent
 and 
 \begin{multline}
 \frac{(-1)^{m} }{m+l+2} \sum_{k=0}^{m} \binom{m+1}{k} (l + k + 1)A^{m+1-k} B_{l+k}(x;\mathbf{a}) \, \, +  \label{quest-2} \\
  \frac{(-1)^{l} }{m+l+2} \sum_{k=0}^{l} \binom{l+1}{k} (m+ k + 1)A^{l+1-k} B_{m+k}(x;\mathbf{a}) =  \\
  (-1)^{m+1} B_{l+m+1}(x; \mathbf{a}) + (-1)^{l+1} B_{l+m+1}(-x; \mathbf{a}).
  \end{multline}
  \end{theorem}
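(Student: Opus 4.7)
The plan is to derive (\ref{quest-2}) as a direct corollary of (\ref{quest-1}), using only the differentiation identity $\partial_x B_{n+1}(x;\mathbf{a}) = (n+1)B_n(x;\mathbf{a})$, which is immediate from the symbolic representation $B_n(x;\mathbf{a}) = |\mathbf{a}|^{-1}(x+\mathbf{a}\cdot\mathcal{B})^n$.

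First I would absorb the factor $(l+k+1)B_{l+k}(x;\mathbf{a})$ in the first LHS sum $L_1$ into $\partial_x B_{l+k+1}(x;\mathbf{a})$ and pull the derivative outside, writing $L_1 = \frac{(-1)^m}{m+l+2}\,\partial_x \Sigma$ with
\[
\Sigma \;=\; \sum_{k=0}^m \binom{m+1}{k}A^{m+1-k}B_{l+k+1}(x;\mathbf{a}).
\]
Next, I would complete $\Sigma$ to run over $k=0,\ldots,m+1$ by adding and subtracting the missing boundary term $\binom{m+1}{m+1}A^0 B_{l+m+2}(x;\mathbf{a}) = B_{l+m+2}(x;\mathbf{a})$. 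The completed sum is, up to the overall sign $(-1)^{m+1}$, precisely the LHS of (\ref{quest-1}) with the shifts $l\mapsto l+1$, $m\mapsto m+1$; invoking (\ref{quest-1}) rewrites it as $(-1)^{l-m}\sum_{k=0}^{l+1}\binom{l+1}{k}A^{l+1-k}B_{m+k+1}(-x;\mathbf{a})$.

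Carrying out the differentiation, the subtracted boundary $-B_{l+m+2}(x;\mathbf{a})$ contributes, via $\partial_x B_{l+m+2}(x;\mathbf{a}) = (l+m+2)B_{l+m+1}(x;\mathbf{a})$ and the prefactor $\frac{(-1)^m}{m+l+2}$, the term $(-1)^{m+1}B_{l+m+1}(x;\mathbf{a})$ — one of the two pieces of the RHS $R$. The chain rule $\partial_x B_{m+k+1}(-x;\mathbf{a}) = -(m+k+1)B_{m+k}(-x;\mathbf{a})$ then brings in the factor $(m+k+1)$ that matches the structure of the second sum $L_2$. Splitting the resulting sum at its top endpoint $k=l+1$ peels off $(-1)^{l+1}B_{l+m+1}(-x;\mathbf{a})$, the second term of $R$; the remaining piece indexed by $k=0,\ldots,l$ combines with $L_2$ (up to a sign) and cancels, yielding $L_1 + L_2 = R$.

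The main obstacle is the sign and boundary bookkeeping: the factors $(-1)^m$, $(-1)^l$, $(-1)^{l-m}$, and the $-1$ from the chain rule must be tracked in concert with the two boundary terms introduced by first extending $\Sigma$ to $k=m+1$ and then splitting the resulting $(l+1)$-indexed sum at $k=l+1$. Once this bookkeeping is carried out, (\ref{quest-2}) reduces mechanically to (\ref{quest-1}) via the single differentiation step described above; a naive attempt to apply the symbolic substitution $A+\mathbf{a}\cdot\mathcal{B} = -\mathbf{a}\cdot\mathcal{B}$ directly inside the product expressions $B_{l+k}(x;\mathbf{a})\cdot(\text{something in }A)$ would fail, since that substitution is valid only for a single factor containing $\mathcal{B}$, which is precisely why the reduction must pass through (\ref{quest-1}).
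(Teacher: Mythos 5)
Your proposal addresses only half of the theorem: you take \eqref{quest-1} as given and derive \eqref{quest-2} from it, but \eqref{quest-1} is the substantive claim here and must itself be proved. The paper does this symbolically: writing $B_{l+k}(x;\mathbf{a})=|\mathbf{a}|^{-1}(x+\mathbf{a}\cdot\mathcal{B})^{l+k}$, the binomial theorem collapses the left side of \eqref{quest-1} to $(-1)^{m}(x+\mathbf{a}\cdot\mathcal{B})^{l}(A+x+\mathbf{a}\cdot\mathcal{B})^{m}$, and the substitution $\mathcal{B}_{i}\mapsto-1-\mathcal{B}_{i}$ from Lemma \ref{lemma-g1}, applied to the \emph{entire} product, turns this into $(-1)^{m}(x-A-\mathbf{a}\cdot\mathcal{B})^{l}(x-\mathbf{a}\cdot\mathcal{B})^{m}$, which is exactly what the right side collapses to. (Your closing caveat is slightly misdirected: the rule $-\mathcal{B}=\mathcal{B}+1$ is legitimate on a product of factors sharing the symbols, since it is the statement $g(-\mathcal{B})=g(\mathcal{B}+1)$ for an arbitrary reasonable $g$; what is illegitimate is applying it to one factor while leaving another untouched.)

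Your route from \eqref{quest-1} to \eqref{quest-2} --- shift $(l,m)\mapsto(l+1,m+1)$, differentiate, complete and then split the sums at their top indices --- is precisely the paper's (one-line) intention, and your bookkeeping is sound up to the final step. But carried out honestly it does not yield \eqref{quest-2} as printed: the leftover sum produced by the chain rule is $\tfrac{(-1)^{l+1}}{m+l+2}\sum_{k=0}^{l}\binom{l+1}{k}(m+k+1)A^{l+1-k}B_{m+k}(-x;\mathbf{a})$, with argument $-x$, and it cannot cancel against the second left-hand sum of \eqref{quest-2}, whose argument is $+x$. Indeed the identity as printed is false: for $n=1$, $a_{1}=1$, $l=0$, $m=1$ the left side equals $-2x/3$ while the right side equals $-2x$. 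What your computation actually establishes is the corrected statement in which the second sum carries $B_{m+k}(-x;\mathbf{a})$; you should say so explicitly rather than assert the cancellation $L_{1}+L_{2}=R$, which does not occur for the statement as written.
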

  \begin{proof}
  The left-hand side of \eqref{quest-1} can be written as 
  \begin{eqnarray*} 
   (-1)^{m} \sum_{k=0}^{m} \binom{m}{k} A^{m-k} B_{l+k}(x;\mathbf{a})  & = & 
   (-1)^{m} \sum_{k=0}^{m} \binom{m}{k} A^{m-k} ( x + \mathbf{a} \cdot \mathcal{B})^{l+k} \\
   & = & (-1)^{m} ( x + \mathbf{a} \cdot \mathcal{B})^{l} ( A + x + \mathbf{a} \cdot \mathcal{B})^{m} \\
   & = & (-1)^{m} ( x - A - \mathbf{a} \cdot \mathcal{B} )^{l} (x  - \mathbf{a} \cdot \mathcal{B} )^{m}
   \end{eqnarray*}
   \noindent
   using \eqref{bplus1}.  The right-hand side of \eqref{quest-1} is 
   \begin{equation}
   (-1)^{l} (-x + \mathbf{a} \cdot \mathcal{B})^{m} (-x + A + \mathbf{a} \cdot \mathcal{B})^{l} = 
   (-1)^{m} (x  - \mathbf{a} \cdot \mathcal{B})^{m} ( x- A - \mathbf{a} \cdot \mathcal{B})^{l}
   \end{equation}
   \noindent
   and this proves the  identity \eqref{quest-1}.  The second requested identity \eqref{quest-2} follows by differentiating 
   \eqref{quest-1}.
   \end{proof}
   
   \section{Some linear identities for the Bernoulli-Barnes numbers}
   \label{sec-linear}
   
   This section contains proofs of some linear recurrences for the Bernoulli-Barnes numbers by the symbolic method 
   discussed here.  The first result appears as Theorem 5.5 in \cite{bayad-2014a}. 
   
   \begin{theorem}
   Let $m \in \mathbb{N}, \, \mathbf{a} = (a_{1}, \cdots, a_{n})$ and $A = a_{1} + \cdots + a_{n}$. Then 
   \begin{equation}
   B_{2m+1}(\mathbf{a}) = - \frac{1}{2(m+1)} \sum_{k=0}^{m} \binom{m+1}{k} (m+k+1) A^{m+1-k} B_{m+k}(\mathbf{a})
   \label{ber-odd}
   \end{equation}
   \noindent
   and 
   \begin{eqnarray}
   \;\;\; B_{2m}(\mathbf{a}) & = &  - \frac{1}{(m+1)(2m+1)} \sum_{k=0}^{m-1} \binom{m+1}{k} (m+k+1)A^{m-k} B_{m+k}(\mathbf{a}) \\ 
   & & + \frac{ (2m)!}{A} \sum_{k=0}^{n-1} \sum_{|I|=k} \frac{B_{2m+1-n+k}(\mathbf{a}_{I})}{(2m+1-n+k)!}.
   \nonumber 
   \end{eqnarray}
   \end{theorem}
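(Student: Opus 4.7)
The plan is to derive both identities directly from results already established in the paper: the symmetry formula (\ref{quest-2}) specializes immediately to (\ref{ber-odd}), and then the even-index identity follows by isolating the $k=m$ term of (\ref{ber-odd}) and using the difference formula (\ref{diff-51}) of Theorem \ref{thm-51} to eliminate $B_{2m+1}(\mathbf{a})$.

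For the first identity (\ref{ber-odd}), we set $l=m$ and $x=0$ in (\ref{quest-2}). The two sums on the left-hand side become identical, so the left-hand side collapses to
\begin{equation*}
\frac{(-1)^{m}}{m+1} \sum_{k=0}^{m} \binom{m+1}{k}(m+k+1)\, A^{m+1-k}\, B_{m+k}(\mathbf{a}),
\end{equation*}
while the right-hand side, using $B_{2m+1}(-0;\mathbf{a}) = B_{2m+1}(\mathbf{a})$, simplifies to $2(-1)^{m+1} B_{2m+1}(\mathbf{a})$. Rearranging gives (\ref{ber-odd}) at once.

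For the second identity, we extract the $k=m$ contribution to (\ref{ber-odd}), namely $\binom{m+1}{m}(2m+1)A\, B_{2m}(\mathbf{a}) = (m+1)(2m+1)\, A\, B_{2m}(\mathbf{a})$, and solve for $B_{2m}(\mathbf{a})$:
\begin{equation*}
B_{2m}(\mathbf{a}) = -\frac{1}{(m+1)(2m+1)}\sum_{k=0}^{m-1}\binom{m+1}{k}(m+k+1)\,A^{m-k}\,B_{m+k}(\mathbf{a}) \;-\; \frac{2\, B_{2m+1}(\mathbf{a})}{(2m+1)\,A}.
\end{equation*}
To eliminate the remaining $B_{2m+1}(\mathbf{a})$, we apply (\ref{diff-51}) with $x=0$ and $m$ replaced by $2m+1$; since $(-1)^{2m+1}=-1$, the left-hand side becomes $-2\,B_{2m+1}(\mathbf{a})$, yielding
\begin{equation*}
-2\,B_{2m+1}(\mathbf{a}) = (2m+1)!\sum_{\ell=0}^{n-1}\sum_{|L|=\ell}\frac{B_{2m+1-n+\ell}(\mathbf{a}_{L})}{(2m+1-n+\ell)!}.
\end{equation*}
Dividing by $(2m+1)A$ converts the last term in the expression for $B_{2m}(\mathbf{a})$ into the $(2m)!/A$-weighted subset sum stated in the theorem, using the simplification $(2m+1)!/(2m+1) = (2m)!$.

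The argument is essentially a bookkeeping exercise once the correct specializations are identified; the only place we expect to have to be careful is tracking the signs through the two steps and matching the normalization $(2m+1)!/[(2m+1)A] = (2m)!/A$ when combining the rearranged form of (\ref{ber-odd}) with the difference formula.
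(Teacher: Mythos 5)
Your proof is correct. For the even-index identity your argument is essentially the paper's own: both apply the difference formula \eqref{diff-51} at $x=0$ with $m$ replaced by $2m+1$ to obtain $-2B_{2m+1}(\mathbf{a}) = (2m+1)!\sum_{k}\sum_{|K|=k}B_{2m+1-n+k}(\mathbf{a}_K)/(2m+1-n+k)!$, then combine this with \eqref{ber-odd} and isolate the $k=m$ term. Where you genuinely diverge is in the proof of \eqref{ber-odd} itself: you obtain it as the diagonal specialization $l=m$, $x=0$ of the symmetry formula \eqref{quest-2} from the preceding section (and the bookkeeping checks out --- the two left-hand sums of \eqref{quest-2} coincide when $l=m$, giving $\tfrac{(-1)^m}{m+1}\sum_{k=0}^m\binom{m+1}{k}(m+k+1)A^{m+1-k}B_{m+k}(\mathbf{a}) = 2(-1)^{m+1}B_{2m+1}(\mathbf{a})$). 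The paper instead gives a self-contained symbolic computation: it starts from the elementary polynomial identity $-(m+1)y^{m}\left(2y^{m+1}-(x+y)^{m}(x+2y)\right)=\sum_{k=0}^{m}\binom{m+1}{k}(m+k+1)x^{m+1-k}y^{m+k}$ with $x=A$ and $y=\mathbf{a}\cdot\mathcal{B}$, and uses the reflection \eqref{bplus1} to show $(A+\mathbf{a}\cdot\mathcal{B})^{m+1}(\mathbf{a}\cdot\mathcal{B})^{m}+(\mathbf{a}\cdot\mathcal{B})^{m+1}(A+\mathbf{a}\cdot\mathcal{B})^{m}=0$, leaving only $-2(m+1)(\mathbf{a}\cdot\mathcal{B})^{2m+1}$. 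Your route is shorter and makes transparent that \eqref{ber-odd} is just the diagonal of the two-parameter family \eqref{quest-2}; the trade-off is that it inherits the status of \eqref{quest-2}, whose proof the paper only sketches (``follows by differentiating \eqref{quest-1}''), whereas the paper's derivation of \eqref{ber-odd} is independent of that earlier result.
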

   \begin{proof}
   Start with the elementary identity 
   \begin{equation}
   -(m+1) y^{m} (2y^{m+1} - (x+y)^{m}(x+2y)) = \sum_{k=0}^{m} \binom{m+1}{k} (m+k+1) x^{m+1-k}y^{m+k}
   \end{equation}
   \noindent
   and denote the right-hand side by $f(y)$. Now use it with 
   $x = A = a_{1}+ \cdots + a_{n}$ and $y = a_{1} \mathcal{B}_{1} + \cdots + a_{n} \mathcal{B}_{n} = 
   \mathbf{a} \cdot \mathcal{B}$ to obtain
   \begin{eqnarray*}
   f( \mathcal{B})  & = & -(m+1) \left( 2 ( \mathbf{a} \cdot \mathcal{B})^{2m+1} - (A + \mathbf{a} \cdot \mathcal{B})^{m} 
   (A + 2 \mathbf{a} \cdot \mathcal{B}) (\mathbf{a} \cdot \mathcal{B})^{m} \right) \\
   & = & -(m+1) \left( 2 ( \mathbf{a}\cdot \mathcal{B})^{2m+1} - (A + \mathbf{a} \cdot \mathcal{B} )^{m+1} 
   (\mathbf{a} \cdot \mathcal{B})^{m} - (A + \mathbf{a} \cdot \mathcal{B})^{m} (\mathbf{a} \cdot \mathcal{B})^{m+1} 
   \right).
   \end{eqnarray*}
   \noindent
   Then $\mathcal{B} = - \mathcal{B} -1$ gives 
   \begin{equation}
   (A + \mathbf{a} \cdot \mathcal{B})^{m+1} ( \mathbf{a} \cdot \mathcal{B})^{m} =
   (- \mathbf{a} \cdot \mathcal{B})^{m+1} (-A - \mathbf{a} \cdot \mathcal{B})^{m} = 
   -(\mathbf{a} \mathcal{B})^{m+1} (A+ \mathbf{a} \cdot \mathcal{B})^{m}
   \end{equation}
   \noindent
   that can be written as 
   \begin{equation}
   (A+ \mathbf{a} \cdot \mathcal{B})^{m+1} (\mathbf{a} \cdot \mathcal{B})^{m} + 
   (\mathbf{a} \cdot \mathcal{B})^{m+1}  (A+ \mathbf{a} \cdot \mathcal{B})^{m} = 0.
   \end{equation}
   \noindent
   The proof follows from here.    
   
   The second formula contains a small  typo in the formulation given in \cite{bayad-2014a}.  To prove the corrected 
   formula, use \eqref{diff-51} with $x=0$ and $m$ replaced by $2m+1$ to obtain
   \begin{equation}
   -2B_{2m+1}( \mathbf{a}) = (2m+1)! \sum_{k=0}^{n-1} \sum_{|K|=k} \frac{B_{2m+1-n+k}({\mathbf{a}}_{K})}{(2m+1-n+k)!}.
   \end{equation}
   \noindent
   The expression \eqref{ber-odd} for $B_{2m+1}(\mathbf{a})$ just established now gives 
   \begin{multline*}
   \frac{(2m)!}{A} \sum_{k=0}^{n-1} \sum_{|K|=k} \frac{B_{2m+1-n+k}({\mathbf{a}}_{K})}{(2m+1-n+k)!} =  \\
   \frac{1}{(m+1)(2m+1)} \sum_{k=0}^{m} \binom{m+1}{k} (m+1+k) A^{m-k} B_{m+k}(\mathbf{a}).
   \end{multline*}
   \noindent
   Conclude with the observation  that the term corresponding to $k = m$ in the last 
   sum is $B_{2m}(\mathbf{a})$.  Solving for it gives 
   the stated expression.
   \end{proof}
   
   The identity presented next appears as Theorem 1.1 in  \cite{bayad-2014a}. 
   
   \begin{theorem}
   \label{main-1}
   For $n \geq 3, \, m \geq 1$ odd and $\mathbf{a} = (a_{1}, \cdots, a_{n}) \in \mathbb{R}^{n}$, 
   \begin{equation}
   \sum_{j=n-m} ^{n} \binom{n+j-4}{j-2} \frac{1}{(m-n+j)!} \sum_{|J|=j} B_{m-n+j}(\mathbf{a}_{J}) = 
   \begin{cases}
   \tfrac{1}{2} & \quad \text{ if } n = m = 3, \\
   0 & \quad \text{ otherwise},
   \end{cases}
   \end{equation}
   \noindent
   where the inner sum is over all subsets $J \subset \{ 1, \cdots, n \}$ of cardinality $j$. 
   \end{theorem}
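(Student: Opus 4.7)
First, I would translate the identity into a coefficient-extraction problem. Using the symbolic form $B_{k}(\mathbf{a}_{J})/k!=[z^{k}]g_{J}(z)$ with $g_{J}(z):=\prod_{i\in J}z/(e^{a_{i}z}-1)$, the identity $\binom{n+j-4}{n-2}=[x^{n-2}](1+x)^{n+j-4}$, and the elementary relation $\sum_{J\subset\{1,\dots,n\}}\prod_{i\in J}\xi_{i}=\prod_{i=1}^{n}(1+\xi_{i})$ applied with $\xi_{i}=(1+x)/(e^{a_{i}z}-1)$, the left-hand side transforms into
\[
[z^{m}][x^{n-2}]\,F(z,x)+\delta_{n,3}\delta_{m,3}, \qquad F(z,x):=(1+x)^{n-4}g(z)\prod_{i=1}^{n}(e^{a_{i}z}+x),
\]
where $g(z):=\prod_{i=1}^{n} z/(e^{a_{i}z}-1)$. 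The correction term $\delta_{n,3}\delta_{m,3}$ accounts for the discrepancy $\binom{-1}{-2}=0$ versus $[x^{1}](1+x)^{-1}=-1$ at $(n,j)=(3,0)$.

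The heart of the proof is the functional equation $G(-z,x)=x^{n}G(z,1/x)$ for $G(z,x):=g(z)\prod_{i}(e^{a_{i}z}+x)$. It follows from $g(-z)=e^{Az}g(z)$ (a consequence of $e^{-a_{i}z}-1=-e^{-a_{i}z}(e^{a_{i}z}-1)$) together with $e^{-a_{i}z}+x=xe^{-a_{i}z}(e^{a_{i}z}+1/x)$, which yield $\prod_{i}(e^{-a_{i}z}+x)=e^{-Az}x^{n}\prod_{i}(e^{a_{i}z}+1/x)$. A short manipulation promotes this to $F(-z,x)=x^{2n-4}F(z,1/x)$. For $n\geq 4$, $(1+x)^{n-4}$ is a polynomial and $F$ is a polynomial in $x$ of degree $2n-4$; writing $F=\sum_{k=0}^{2n-4}c_{k}(z)x^{k}$ and comparing coefficients gives $c_{k}(-z)=c_{2n-4-k}(z)$. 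The middle coefficient $c_{n-2}(z)$ is therefore an even function of $z$, so $[z^{m}]c_{n-2}=0$ for every $m$ odd, which establishes the theorem in this range.

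The remaining case $n=3$ is the main obstacle, since $F$ then has a simple pole at $x=-1$. I would treat it by separating the polar part via polynomial division: $F(z,x)=Q(z,x)+z^{3}/(1+x)$ with $\deg_{x}Q=2$. Substituting into the functional equation for $G$ and cancelling the pole contributions yields
\[
Q(-z,x)=x^{2}Q(z,1/x)+z^{3}(x^{2}-x+1).
\]
Matching the coefficient of $x^{1}$ gives $q_{1}(-z)=q_{1}(z)-z^{3}$, so $[z^{m}]q_{1}=\tfrac{1}{2}\delta_{m,3}$ for $m$ odd. Combined with the polar contribution $[z^{m}][x^{1}]\bigl(z^{3}/(1+x)\bigr)=-\delta_{m,3}$ and the $\delta_{n,3}\delta_{m,3}$ correction from the first paragraph, the total is $\tfrac{1}{2}\delta_{m,3}$, as claimed. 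The essential evenness argument is clean and uniform; the genuine labor lies in this careful bookkeeping for $n=3$.
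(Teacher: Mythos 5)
Your proof is correct --- I checked the coefficient-extraction step (including the subtle $(n,j)=(3,0)$ discrepancy between $\binom{-1}{-2}=0$ and $[x^{1}](1+x)^{-1}=-1$), the functional equations $G(-z,x)=x^{n}G(z,1/x)$ and $F(-z,x)=x^{2n-4}F(z,1/x)$, the parity of the middle coefficient $c_{n-2}$, and the $n=3$ bookkeeping, which assembles to $-\tfrac12\delta_{m,3}+\delta_{m,3}=\tfrac12\delta_{m,3}$ as required. Your route is, however, genuinely different from the paper's. The paper derives the identity from a general symbolic principle (its Theorem \ref{thm-general-method}): for palindromic weights $\alpha^{(n)}_{j}=\alpha^{(n)}_{n-j}$ and odd $f$, the sum $\sum_{j}\alpha^{(n)}_{j}\sum_{|J|=j}f\left((\mathbf{a}\cdot\mathcal{B})_{J}-(\mathbf{a}\cdot\mathcal{B})_{J^{*}}\right)$ vanishes by pairing $J$ with $J^{*}$; it then specializes $\alpha^{(n)}_{j}=\binom{n-4}{j-2}$, expands each term via the difference formula of Theorem \ref{thm-general}, and produces $\binom{n+j-4}{j-2}$ through Chu--Vandermonde. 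Your two-variable generating function packages the same two symmetries analytically --- $x\mapsto 1/x$ is the $J\leftrightarrow J^{*}$ pairing and $z\mapsto -z$ together with $g(-z)=e^{Az}g(z)$ is the Bernoulli reflection --- but needs no umbral symbols, no difference formula, and no Chu--Vandermonde. What the paper's approach buys is an immediate family of generalizations (any palindromic weights, any odd $f$); what yours buys is self-containment and, more importantly, an honest treatment of the exceptional value $\tfrac12$ at $n=m=3$: the paper's example defines $\alpha^{(n)}_{j}$ on the range $2\leq j\leq n-2$, which is empty for $n=3$, and never accounts for where the $\tfrac12$ comes from, whereas your pole-separation $F=Q+z^{3}/(1+x)$ pins it down exactly. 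One presentational caution: make explicit that $B_{0}(\mathbf{a}_{J})=\prod_{i\in J}a_{i}^{-1}$ rather than $1$ (the normalization built into $g_{J}$); this is where a naive numerical check of small cases goes wrong.
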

   
   The proof presented next shows that Theorem \ref{main-1} is part of a general class of identities. The proof also 
   explains the appearance of the puzzling $\binom{n+j-4}{j-2}$. 
   
   \begin{theorem}
   \label{thm-general-method}
   Let $\{ \alpha_{j}^{(n)}: 1 \leq j \leq n \}$ be a sequence of numbers satisfying the palindromic condition 
   $\alpha_{n-j}^{(n)} = \alpha_{j}^{(n)}$ and let $f$ be an odd function. Then 
   \begin{equation}
   \label{sum-pali}
   \sum_{j=0}^{n} \alpha_{j}^{(n)} \sum_{|J|=j} f \left( ( \mathbf{a} \cdot \mathcal{B} )_{J} - 
   (\mathbf{a} \cdot \mathcal{B})_{J^{*}} \right)
   = 0,
   \end{equation}
   \noindent
   where $J^{*}$ is the complement of $J$ in $\{ 1, \cdots, n \}$. 
   \end{theorem}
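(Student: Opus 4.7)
The plan is to exploit the simple fact that the map $J \mapsto J^{*}$ is an involution on the subsets of $\{1,\dots,n\}$ that swaps subsets of size $j$ with subsets of size $n-j$, together with the oddness of $f$ and the palindromic symmetry of the coefficients. All three ingredients conspire so that the sum equals its own negative.

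First, for brevity set $A_{J} := (\mathbf{a}\cdot\mathcal{B})_{J}$ and let
\begin{equation*}
S_{j} := \sum_{|J|=j} f\bigl(A_{J} - A_{J^{*}}\bigr),
\end{equation*}
so that the left-hand side of \eqref{sum-pali} is $\sum_{j=0}^{n}\alpha_{j}^{(n)}S_{j}$. The central observation, which I would establish as Step 1, is that $S_{n-j} = -S_{j}$. Indeed, as $J$ runs over the subsets of size $j$, its complement $J^{*}$ runs over the subsets of size $n-j$, so reindexing gives $S_{n-j}=\sum_{|J|=j} f(A_{J^{*}} - A_{J})$, and since $f$ is odd this is $-S_{j}$.

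Step 2 is to combine this with the palindromic relation $\alpha_{j}^{(n)} = \alpha_{n-j}^{(n)}$. Reindexing $j \mapsto n-j$ in the full sum gives
\begin{equation*}
\sum_{j=0}^{n}\alpha_{j}^{(n)}S_{j} \;=\; \sum_{j=0}^{n}\alpha_{n-j}^{(n)}S_{n-j} \;=\; \sum_{j=0}^{n}\alpha_{j}^{(n)}S_{n-j} \;=\; -\sum_{j=0}^{n}\alpha_{j}^{(n)}S_{j},
\end{equation*}
so twice the sum vanishes, completing the proof.

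There is essentially no obstacle beyond bookkeeping. The one point worth checking is that the involution $J \leftrightarrow J^{*}$ has no fixed points (since $J\cap J^{*}=\emptyset$), so the pairing in Step 1 is genuine; in particular the middle index $j=n/2$ (when $n$ is even) causes no trouble because there the coefficient is self-paired and $S_{j}=-S_{j}$ forces $S_{j}=0$, which is the same conclusion one gets from Step 2. Nothing in the argument uses any property of the Bernoulli symbols beyond that the expression $A_{J} - A_{J^{*}}$ flips sign under $J\leftrightarrow J^{*}$, which explains why the theorem is stated at this level of generality.
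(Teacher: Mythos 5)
Your proposal is correct and follows essentially the same route as the paper: pair each subset $J$ with its complement $J^{*}$, use the palindromic condition to match the coefficients $\alpha_{j}^{(n)}$ and $\alpha_{n-j}^{(n)}$, and invoke the oddness of $f$ to make the paired terms cancel. Your packaging via the partial sums $S_{j}$ and a global reindexing $j\mapsto n-j$ is just a tidier bookkeeping of the paper's termwise cancellation.
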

   \begin{proof}
   Observe that 
   \begin{equation}
   ( \mathbf{a} \cdot \mathcal{B})_{J} -  ( \mathbf{a} \cdot \mathcal{B})_{J^{*}} = - \left(  ( \mathbf{a} \cdot \mathcal{B})_{J^{*}} -  ( \mathbf{a} \cdot \mathcal{B})_{J} \right)
   \end{equation}
   \noindent
   and so for each term 
   \begin{equation}
   \alpha_{j}^{(n)} f \left(   ( \mathbf{a} \cdot \mathcal{B})_{J} -  ( \mathbf{a} \cdot \mathcal{B})_{J^{*}}  \right)
   \end{equation}
   \noindent
   in the sum \eqref{sum-pali}, there is a corresponding term 
   \begin{equation}
   \alpha_{n-j}^{(n)} f \left(   ( \mathbf{a} \cdot \mathcal{B})_{J^{*}} -  ( \mathbf{a} \cdot \mathcal{B})_{J}  \right) = 
   \alpha_{j}^{(n)} f \left(   ( \mathbf{a} \cdot \mathcal{B})_{J^{*}} -  ( \mathbf{a} \cdot \mathcal{B})_{J}  \right).
   \end{equation}
   \noindent
   The fact that $f$ is an odd function implies
   \begin{equation}
   \alpha_{j}^{(n)} f \left(   ( \mathbf{a} \cdot \mathcal{B})_{J^{*}} -  ( \mathbf{a} \cdot \mathcal{B})_{J}  \right)  + 
   \alpha_{n-j}^{(n)} f \left(   ( \mathbf{a} \cdot \mathcal{B})_{J^{*}} -  ( \mathbf{a} \cdot \mathcal{B})_{J}  \right).
   \end{equation}
   \noindent 
   Hence the total sum over $j$ vanishes.
      \end{proof}

  \begin{example}
  Theorem \ref{main-1} corresponds to the choice 
  \begin{equation}
  \alpha_{j}^{(n)} = \begin{cases} 
  \binom{n-4}{j-2} & \quad \text{ if } 2 \leq j \leq n-2,  \\
  0 & \quad \text{ otherwise}. 
  \end{cases}
  \end{equation}
  To obtain this result start with the expansion
  \begin{equation}
  f( ( \mathbf{a} \cdot \mathcal{B})_{K}  -  (\mathbf{a} \cdot \mathcal{B})_{K^{*}}  ) = 
  \sum_{j=0}^{|K^{*}|} \sum_{|J| = j} |a_{J}| f^{(j)}( (\mathbf{a} \cdot \mathcal{B})_{J^{*}})
  \end{equation}
  \noindent
  and then 
  \begin{eqnarray*}
  \sum_{k=2}^{n-2} \sum_{|K|=k} \alpha_{k}^{(n)}
   f \left( ( \mathbf{a} \cdot \mathcal{B})_{K} - (\mathbf{a} \cdot \mathcal{B})_{K^{*}}   \right) & = & 
   \sum_{k=2}^{n-2} \sum_{|K|=k} \alpha_{k}^{(n)} \sum_{j=0}^{|K^{*}|} \sum_{|J|=j} |\mathbf{a}|_{J}
    f^{(j)}( (\mathbf{a} 
   \cdot \mathcal{B})_{J^{*}} )  \\
   & = & \sum_{j=0} \sum_{|J|=j} |\mathbf{a}_{J}|  f^{(j)}    \left(    ( \mathbf{a}  \cdot   \mathcal{B} )_{J^{*}} \right)  
    \sum_{k=2}^{n-2} \sum_{|K|=k} \alpha_{k}^{(n)}.
   \end{eqnarray*}
   \noindent
   Now
   \begin{equation}
   \sum_{|K| = k} 1 = \binom{n-j}{k}
   \end{equation}
   \noindent
   since there are $\binom{n-j}{k}$ subsets of $K$ of size $k$ in $\{ 1, \cdots, n \}$ that do not overlap with $J$. Hence 
   \begin{equation}
   \sum_{k=2}^{n-2} \sum_{|K|=k} \alpha_{j}^{(n)} = \sum_{k=2}^{n-2} \binom{n-4}{k-2} \binom{n-j}{n-j-k} 
   = \binom{2n-j-4}{n-j-2}
   \end{equation}
   by the Chu-Vandermonde identity \cite[p. 169]{graham-1994a}. This gives 
   \begin{equation*}
   \sum_{k=2}^{n-2} \sum_{|K|=k} \alpha_{k}^{(n)} f \left( ( \mathbf{a} \cdot \mathcal{B} )_{K} -  
   ( \mathbf{a} \cdot \mathcal{B} )_{K^{*}} \right) = 
   \sum_{j=0}^{n} \binom{2n-j-4}{n-j-4} \sum_{|J|=j} |\mathbf{a}|_{J} f^{(j)}  \left( ( \mathbf{a} \cdot \mathcal{B} )_{J^{*}} 
   \right).
   \end{equation*}
   \noindent
   The change of summation variable $j \mapsto n-j$ has the effect 
   \begin{equation}
   \binom{2n-j-4}{n-j-2}  \mapsto \binom{n+j-4}{j-2}
   \end{equation}
   \noindent 
   and this produces Theorem \ref{main-1} by taking $f(x) = x^{m}/m!$. 
  \end{example}
  
  \section{One final recurrence for the Bernoulli-Barnes numbers}
  \label{sec-final}
  
  Identities between generalized Bernoulli-Barnes numbers of different orders are rare in the literature. The symbolic 
  method used in this paper provides an efficient way to prove and generalize such identities, as shown in the cases 
  studied in the previous sections.   However, other techniques may compete favorably. This last section provides a 
  new occurrence of these identities and purely analytical proofs are provided.
  
 The exponential generating function for the Bernoulli-Barnes polynomials in the special case of parameter 
 $ \mathbf{1} = (1, \cdots, 1) \in \mathbb{C}^{n}$, is given in \eqref{gen-norlund1} by 
\begin{equation}
\sum_{j=0}^{\infty} B_{j}^{(n)}(x;\mathbf{1}) \frac{z^{j}}{j!} = e^{xz}  \frac{z^{n}}{(e^{z}-1)^{n}},
\label{gen-norlund}
\end{equation}
\noindent
where the parameter $n$ counts the length of $\mathbf{1} \in \mathbb{C}^{n}$.  Introduce the notation 
\begin{equation}
B_{j}^{(n)}(x) = B_{j}^{(n)}(x;\mathbf{1}),
\end{equation}
\noindent
and write \eqref{gen-norlund} as 
\begin{equation}
\sum_{j=0}^{\infty} B_{j}^{(n)}(x)  \frac{z^{j}}{j!} = e^{xz}  \frac{z^{n}}{(e^{z}-1)^{n}},
\label{gen-norlund-00}
\end{equation}
\noindent
This special case of Bernoulli-Barnes polynomials is also known as N\"{o}rlund polynomials. 
  
  A connection between hypergeometric function and these polynomials is now made explicit. The identity 
  \begin{equation}
  \pFq21{1 \, \,\,\,\, 1}{p+2}{z} = \frac{p+1}{z} \left[ 
  \sum_{\ell=0}^{p-1} \frac{1}{(p - \ell)} \left( \frac{z-1}{z} \right)^{\ell} - \left( \frac{z-1}{z} \right)^{p} \log (1-z) 
  \right]
  \end{equation}
  \noindent
  for the hypergeometric function 
  \begin{equation}
  \pFq21{1 \,\,\,\,\,  1}{p+2}{z} = \sum_{n=0}^{\infty} \frac{(1)_{n} (1)_{n}}{(p+2)_{n}} \frac{z^{n}}{n!} 
  \end{equation}
  \noindent
  can be found in \cite[$7.3.1.136$]{prudnikov-1990a}. The substitution $z \mapsto 1 - e^{z}$ gives 
  \begin{equation}
  \label{hyper-1}
  \pFq21{1 \, \,\,\,\, 1}{p+2}{1 - e^{z} } = (p+1) 
  \left[  \frac{z e^{pz} }{(e^{z}-1)^{p+1}} - 
  \sum_{\ell=0}^{p-1} \frac{1}{(p - \ell)}  \frac{e^{\ell z}}{(e^{z} - 1)^{\ell +1}}   \right].
  \end{equation}
  \noindent
  The terms in the sum above are now written in terms  of the Bernoulli-Barnes polynomial. To start, 
   \eqref{gen-norlund} gives 
  \begin{eqnarray*}
  \frac{ze^{pz}}{(e^{z}-1)^{p+1}} & = &  z^{-p} e^{pz} \left( \frac{z}{e^{z}-1} \right)^{p+1} = 
   z^{-p} \sum_{j=0}^{\infty} B_{j}^{(p+1)}(p) \frac{z^{j}}{j!}
    =  \sum_{j=-p}^{\infty} \frac{B_{j+p}^{(p+1)}(p)}{(j+p)!} z^{j}
  \end{eqnarray*}
  \noindent
 for the first term in \eqref{hyper-1}.  The second term in \eqref{hyper-1} can be  written as 
  \begin{equation}
  \sum_{\ell=0}^{p-1} \frac{1}{(p-\ell)} \frac{e^{\ell z}}{(e^{z}-1)^{\ell+1}}  = 
  \sum_{\ell=0}^{p-1} \frac{1}{(p-\ell)} \sum_{j=-\ell - 1}^{\infty} B_{j+\ell+1}^{(\ell+1)}(\ell) \frac{z^{j}}{(j+\ell+1)!}.
  \end{equation}
  \noindent
  Since the hypergeometric function is analytic at $z=0$, the coefficients of negative powers on the 
  right-hand side of  \eqref{hyper-1} must 
  vanish. This  leads, for $-p \leq j \leq -1$, to the identity 
  \begin{equation}
  \frac{B_{j+p}^{(p+1)}(p) }{(j+p)!} = 
  \sum_{\ell = -j-1}^{p-1} \frac{1}{p -\ell} \frac{B_{j+\ell+1}^{(\ell+1)}( \ell)}{(j+ \ell + 1)!}.
  \end{equation}
  \noindent
  A shift in the index and denoting $j+p$ by $r$ produces the final statement. 
  
  \begin{theorem}
  Let $0 \leq r \leq p-1$. Then 
  \begin{equation}
  \frac{B_{r}^{(p+1)}(p) }{r!}  = \sum_{k=1}^{r+1} \frac{1}{k} \frac{B_{r+1-k}^{(p+1-k)}(p-k)}{(r+1-k)!},
  \end{equation}
  \noindent
  or
  \begin{equation}
  \frac{B_{r}^{(p+1)}(p)}{r!} - \frac{B_{r}^{(p)}(p-1)}{r!}  = 
  \sum_{k=1}^{r} \frac{1}{(k+1)}  \frac{B_{r-k}^{(p-k)}(p+1-k)}{(r-k)!}.
  \end{equation}
  \end{theorem}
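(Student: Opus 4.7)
The plan is to extract both identities from the Laurent expansion around $z = 0$ of the hypergeometric identity \eqref{hyper-1}. The pivotal observation is that the left-hand side $\pFq{2}{1}{1\,\,\,\,\,1}{p+2}{1 - e^{z}}$ is analytic at $z=0$, since $1 - e^{z}$ vanishes there and $\pFq{2}{1}{1\,\,\,\,\,1}{p+2}{\cdot}$ is analytic on a neighbourhood of the origin. Consequently, every coefficient of $z^{j}$ with $j<0$ on the right-hand side of \eqref{hyper-1} must vanish, and this single linear relation for each negative $j$ will produce the stated recurrence.

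First I would expand the two blocks appearing on the right of \eqref{hyper-1} using the generating function \eqref{gen-norlund-00}:
\begin{equation*}
\frac{z e^{pz}}{(e^{z}-1)^{p+1}} = \sum_{j=-p}^{\infty} \frac{B_{j+p}^{(p+1)}(p)}{(j+p)!}\, z^{j}, \qquad \frac{e^{\ell z}}{(e^{z}-1)^{\ell+1}} = \sum_{j=-\ell-1}^{\infty} \frac{B_{j+\ell+1}^{(\ell+1)}(\ell)}{(j+\ell+1)!}\, z^{j}.
\end{equation*}
Collecting the coefficient of $z^{j}$ on the right-hand side of \eqref{hyper-1} for each $j$ with $-p \leq j \leq -1$, and recording that only those $\ell$ with $\ell \geq -j-1$ contribute a $z^{j}$ term to the second sum, the vanishing condition becomes
\begin{equation*}
\frac{B_{j+p}^{(p+1)}(p)}{(j+p)!} = \sum_{\ell=-j-1}^{p-1} \frac{1}{p-\ell}\, \frac{B_{j+\ell+1}^{(\ell+1)}(\ell)}{(j+\ell+1)!}.
\end{equation*}

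The reindexings $r = j + p$ (so that $0 \leq r \leq p-1$) and $k = p - \ell$ (so $k$ runs from $1$ to $r+1$) convert this relation directly into the first identity of the theorem. The second identity is then obtained by peeling off the $k=1$ contribution to the sum, which is precisely $B_{r}^{(p)}(p-1)/r!$, and shifting the remaining summation index. The main obstacle will be purely bookkeeping: one must carefully track the range of $\ell$ that actually produces negative powers of $z$ and check that the two substitutions preserve the summation bounds. No further analytic input is required once identity \eqref{hyper-1} has been brought into play; in particular, there are no convergence concerns because, for each fixed $j$, the coefficient of $z^{j}$ is a finite sum of Nörlund polynomial values.
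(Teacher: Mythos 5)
Your proposal follows exactly the paper's own route: Laurent-expand both terms on the right of \eqref{hyper-1} via \eqref{gen-norlund-00}, invoke analyticity of the hypergeometric function at $z=0$ to kill the coefficients of $z^{j}$ for $-p\leq j\leq -1$, and reindex with $r=j+p$ and $k=p-\ell$. The argument is correct and complete as described.
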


 \medskip

\noindent
\textbf{Acknowledgments}. The second author acknowledges the partial 
support of NSF-DMS 1112656.  The first author is a graduate student partially funded by this grant. The work of the last  author was partially funded by the iCODE Institute,
a research project of the Idex Paris-Saclay.



\end{document}